\newtheorem{theorem}{Theorem}[section]
\newtheorem{lemma}[theorem]{Lemma}
\newtheorem*{Acknowledgement}{\textnormal{\textbf{Acknowledgement}}}
\newtheorem{corollary}[theorem]{Corollary}
\newtheorem{note}[theorem]{Note}
\theoremstyle{definition}
\newtheorem{definition}[theorem]{Definition}
\newtheorem{example}[theorem]{Example}
\newtheorem{Open Prob}[theorem]{Open Problem}
\theoremstyle{remark}
\numberwithin{equation}{section}
\def\DJ{\leavevmode\setbox0=\hbox{D}\kern0pt\rlap{\kern.04em\raise.188\ht0\hbox{-}}D}
\begin{document}

\title[Best proximity point results in topological spaces]{Best proximity point results in  topological spaces and extension of Banach contraction principle}

\author[S.\ Som, S.\ Laha, L.K. \ Dey]
{Sumit Som$^{1}$, Supriti Laha$^{2}$, Lakshmi Kanta Dey$^{3}$}

\address{{$^{1}$\,} Sumit Som,
                    Department of Mathematics,
                    National Institute of Technology
                    Durgapur, India.}
                    \email{somkakdwip@gmail.com}

\address{{$^{2}$\,} Supriti Laha,
                    Department of Mathematics,
                    National Institute of Technology
                    Durgapur, India.}
                    \email{lahasupriti@gmail.com}

\address{{$^{3}$\,} Lakshmi Kanta Dey,
                    Department of Mathematics,
                    National Institute of Technology
                    Durgapur, India.}
                    \email{lakshmikdey@yahoo.co.in}
\subjclass{$47H10$, $54H25$, $46A50$.}
\keywords{Best proximity point, topological space, Banach contraction principle.}

\begin{abstract}
In this paper, we introduce the notion of topologically Banach contraction mapping defined on an arbitrary topological space $X$ with the help of a continuous function $g:X\times X\rightarrow \mathbb{R}$ and investigate the existence of fixed points of such mapping. Moreover, we introduce two types of mappings defined on a non-empty subset of $X$ and produce sufficient conditions which will ensure 
the existence of best proximity points for these mappings. Our best proximity point results also extend some existing results from metric spaces or Banach spaces to topological spaces. More precisely,  our newly introduced mappings are more general than that of the corresponding notions introduced by Bunlue and Suantai [Arch. Math. (Brno), 54(2018),  165-176]. We present several examples to validate our results and justify its motivation.  To study best proximity point results, we introduce the notions of $g$-closed, $g$-sequentially compact subsets of $X$ and produce examples to show that there exists a non-empty subset of $X$ which is not closed, sequentially compact under usual topology but is $g$-closed and $g$-sequentially compact. 

\end{abstract}
\maketitle
\section{\bf{Introduction}}
Metric fixed point theory is an essential part of mathematics. It gives necessary and sufficient conditions that will ensure the existence of solutions of the equation $Ux=x$ where $U$ is a self-mapping defined on a metric space $ M$.  Such a solution is called a fixed point of the mapping $U.$ Banach contraction principle for standard metric spaces is a pioneer result in this connection. It has a lot of applications in the area of differential equations, integral equations. Over the years, many Mathematicians have weakened the metric structure and prove the Banach contraction principle for such spaces, but till now, no result is found, which extends the Banach contraction principle from metric spaces to arbitrary topological spaces. In this paper, we take an arbitrary topological space $X$ and a real-valued continuous function $g$ defined on the Cartesian product $X\times X.$ Then we introduce the notion of topologically Banach contraction mapping defined on a topological space $X$ with respect to $g$ and investigate  the existence of fixed points of such mapping. As a consequence, we extend the famous Banach contraction principle from standard metric spaces to topological spaces and we can retrieve the Banach contraction principle for metric spaces as a particular case of our theorem. On the other hand, if $U:A\rightarrow B$ is mapping where $A,B$ are non-empty subsets of the metric space $(M,\rho),A\neq B$ and $U(A)\cap A= \emptyset$ then the mapping $U$ has no fixed points. So, in case of a non-self map, one seek for an element in the domain space whose distance from its image is minimum i.e., the interesting problem is to $\mbox{minimize}~\rho(x,Ux)$ such that $x\in A.$ Since $\rho(x,Ux)\geq D(A,B)=\mbox{inf}~\{\rho(x,y):x\in A, y\in B\},$ so one can search for an element $x\in A$ such that $\rho(x,Ux)= D(A,B).$ Best proximity point problems deal with this situation. In the year $2011$, Basha \cite{BS} investigated the existence of best proximity points of proximal contractions. In the years $2013$ and $2015$, Gabeleh \cite{MG,MG1} introduced the notion of proximal nonexpansive mappings, Berinde weak proximal contractions in the context of metric spaces and investigated the existence of best proximity points of those classes of mappings. For more results, see \cite{MG2} and the references therein. In the year $2018$, Bunlue and Suantai \cite{BSS} introduced the notion of proximal weak contraction and proximal Berinde nonexpansive mappings and discussed the existence of best proximity points for those classes of mappings. All these results are formulated in the framework of metric spaces or Banach spaces where the standard metric or norm plays an important role. Recently in the year $2020$, Raj and Piramatchi \cite{RP} presented a way in which we can extend the best proximity point results from standard metric spaces to topological spaces, and it is exciting. In this paper, we have introduced the notions of topologically proximal weak contraction, topologically proximal Berinde non-expansive on topological spaces and discuss the existence of best proximity points for these mappings. We have presented ample examples to validate our results. Moreover, we have introduced these notions w.r.t a continuous function and present examples which show that there exist two continuous functions such that the mappings are topologically proximal weak contraction or topologically proximal Berinde non-expansive with respect to one continuous function but not with respect to another continuous function. Our best proximity point result about topologically proximal weak contractions also extends the Banach contraction principle for non-self mappings.
% Also a linear structure in necessary to define convex subsets of a linear space.

On the other hand, in 1970, Takahashi \cite{TA} first introduced the notion of convexity in metric spaces, and with the help of this notion, in this paper, we have defined the concept of topologically convex structure on an arbitrary topological space. Our best proximity results improve and extend the results in \cite{BS,BSS,MG,MG1} from standard metric spaces, Banach spaces to topological spaces. If the underlying space is metrizable with respect to the metric $d$, then by taking the continuous function $g=d$ we will recover those results. To build the theory of this paper, we introduce the notion of $g$-closed, $g$-sequentially compact subset of the topological space $X$ with the help of the continuous function $g$ defined on $X\times X$ and present examples to show that there exists a non-empty subset of $X$ which is not closed and sequentially compact with respect to the usual topology but is $g$-closed and $g$-sequentially compact for some continuous function $g.$ 

%In the year 2011, Basha \cite{BS} introduced the notion of an approximatively compact subset of a metric space $M$ with respect to another set. In the last part of this paper, we extend this idea to introduce the notion of approximatively $g$-compact subset of a topological space $X$ with respect to another set with the help of the continuous function $g.$ We present an example to show that there exist two non-empty subsets $A,B$ of a topological space $X$ and a continuous function $g$ such that $A$ is approximatively $g$-compact with respect to $B$ but $A$ is not approximatively compact with respect to $B$ in the usual topology.
\section{\bf{Main results}}
We introduce some definitions which will be necessary for the development of our results.
\begin{definition}
Let $X$ be a topological space and $g: X\times X \rightarrow \mathbb{R}$ be a continuous function. Let $\{x_n\}$ be a sequence in $X$ and $x\in X.$ Then $\{x_n\}$ is said to be $g$-convergent to $x$ if $$|g(x_n, x)|\rightarrow 0 \ as\ n\rightarrow \infty$$ i.e., for a given $\varepsilon > 0$ there exists $k\in \mathbb{N}$ such that
$$|g(x_n, x)|< \varepsilon~\forall~ n\geq k.$$
%We denote it by $x_n \xrightarrow{g} x.$
\end{definition}

\begin{definition}
Let $X$ be a topological space and $g: X\times X \rightarrow \mathbb{R}$ be a continuous function. Let $\{x_n\}$ be a sequence in $X.$ Then $\{x_n\}$ is said to be $g$-Cauchy if $$|g(x_n, x_m)|\rightarrow 0~~\mbox{as}~\ n, m\rightarrow \infty$$ i.e., for a given $\varepsilon> 0$ there exists $ k\in \mathbb{N}$ such that
$$|g(x_n, x_m)|< \varepsilon~\forall~ n, m\geq k.$$
\end{definition}

\begin{definition}
Let $X$ be a topological space and $g: X\times X \rightarrow \mathbb{R}$ be a continuous function. $X$ is said to be $g$-complete if every $g$-Cauchy sequence $\{x_n\}$ is $g$-convergent to a point $x\in X.$
\end{definition}

%We will present a lemma which will be needed in this paper.

\begin{lemma}\label{l1}
Let $X$ be a topological space and $g: X\times X \rightarrow \mathbb{R}$ be a continuous function such that $g(x,y)=0\Rightarrow x=y$ and $|g(x,z)|\leq |g(y,x)|+|g(y,z)|~\forall~x,y,z \in X.$ Then the limit of a $g$-convergent sequence is unique.
\end{lemma}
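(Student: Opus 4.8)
The plan is to argue by contradiction-free direct estimation: assume $\{x_n\}$ is $g$-convergent both to $x$ and to $y$, and show $g(x,y)=0$, which by the first hypothesis forces $x=y$.

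First I would record what $g$-convergence to the two points gives us: $|g(x_n,x)|\to 0$ and $|g(x_n,y)|\to 0$ as $n\to\infty$. The key observation is then to feed the right triple into the generalized triangle inequality $|g(a,c)|\le |g(b,a)|+|g(b,c)|$. Since $g$ is not assumed symmetric, the substitution has to be chosen with care: taking $a=x$, $c=y$, and $b=x_n$ yields
\[
|g(x,y)|\le |g(x_n,x)|+|g(x_n,y)|
\]
for every $n\in\mathbb{N}$.

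Next I would let $n\to\infty$ on the right-hand side. Both terms tend to $0$ by the two convergence statements, and the left-hand side does not depend on $n$, so $|g(x,y)|=0$, i.e.\ $g(x,y)=0$. Applying the hypothesis $g(x,y)=0\Rightarrow x=y$ then gives $x=y$, which is exactly the asserted uniqueness.

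There is essentially no serious obstacle here; the only point requiring attention is the non-symmetry of $g$, which is why one must verify that the particular instance of the inequality used really is licensed by the stated hypothesis (it is, with $y$ playing the role of the ``middle'' variable $b$ and the two limit points playing the roles of the outer variables). No completeness or topological input beyond continuity of $g$ is needed for this lemma.
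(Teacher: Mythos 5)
Your argument is correct: the choice of $x_n$ as the middle variable in the hypothesis $|g(a,c)|\le |g(b,a)|+|g(b,c)|$ gives $|g(x,y)|\le |g(x_n,x)|+|g(x_n,y)|\to 0$, and the implication $g(x,y)=0\Rightarrow x=y$ finishes it. The paper omits the proof as ``straightforward,'' and yours is exactly the intended direct argument, with the right care taken about the non-symmetry of $g$.
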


\begin{proof}
The proof is straightforward, so omitted.
\end{proof}

We show by an example that if the conditions of the Lemma \ref{l1} are violated, then the $g$-limit may not be unique.

\begin{example}\label{e2}
Consider $\mathbb{R}^2$ with usual topology. Define $g:\mathbb{R}^2 \times \mathbb{R}^2\rightarrow \mathbb{R}$ be defined by $g((x,y),(u,v))=xu.$ Then $g$ is a continuous function. Let $A=[-1,1]\times [-1,1].$ For the function $g$ we have, $g\Big((1,0),(0,1)\Big)=0$ but $(1,0)\neq (0,1).$ Also $g\Big((1,0),(1,0)\Big)=1\neq 0.$ So here, $g\Big((x,y),(u,v)\Big)=0 \nLeftrightarrow (x,y)=(u,v).$ Also, if we take $x=(1,0),y=(0,0),z=(4,0)$ then $|g(x,z)|>|g(y,x)|+|g(y,z)|.$ Now consider the sequence $\{x_n\}\subset A$ defined by $x_n=(\frac{1}{n},1).$ Then it can be seen that the sequence $\{x_n\}$ is $g$-convergent to $(0,1)$ and also to $(\frac{1}{2},1).$ So the limit is not unique. In fact, this sequence has infinitely many $g$-limits.
\end{example}

Now we introduce the notion of topologically Banach contraction mapping in a topological space $X$ with respect to a continuous function as follows:

\begin{definition}\label{d2}
Let $X$ be a topological space and $g:X\times X\rightarrow \mathbb{R}$ be a continuous function. Let $T:A\rightarrow B$ be a mapping where $A,B \subseteq X$ and $A,B \neq \phi.$ The mapping $T$ is said to be topologically Banach contraction w.r.t $g$ if there exists $\alpha\in (0,1)$ such that $$\Big|g\Big(T(x),T(y)\Big)\Big|\leq \alpha \Big|g(x,y)\Big|~\mbox{for all}~x,y \in A.$$
\end{definition}

We present an example of a topologically Banach contraction mapping $f$ with respect to a real valued continuous function $g,$ which is not a contraction mapping with respect to the metric $d$ with respect to which the space is metrizable. Also, in this example we show that, though, $f$ is a topologically Banach contraction mapping with respect to a real valued continuous function $g,$ may not be a topologically Banach contraction mapping with respect another real valued continuous function $h.$

\begin{example}
Consider $\mathbb{R}^{2}$ with the usual topology. Let $A=[\frac{1}{2},1]\times [0,1]$ and $B=[1,2]\times [0,1].$ Consider $g:\mathbb{R}^{2}\times \mathbb{R}^{2}\rightarrow \mathbb{R}$ by $g\Big((x_1,y_1),(u_1,v_1)\Big)=\mbox{min}~\{y_1,v_1\}.$ Define $T:A\rightarrow B$ by $T(x,y)=(2x,\frac{y}{2}),~(x,y)\in A.$ Let $x_1=(t_1,p_1)$ and $x_2=(t_2,p_2)\in A.$ Now $$\Big|g\Big(T(x_1),T(x_2)\Big)\Big|=\Big|g\Big((2t_1,\frac{p_1}{2}),(2t_2,\frac{p_2}{2})\Big)\Big|=\frac{1}{2}\Big|g(x_1,x_2)\Big|.$$
So, the mapping $T$ is topologically Banach contraction w.r.t $g.$ It can be seen that this mapping is not a contraction with respect to the usual metric on $\mathbb{R}^{2}.$

Now define $h:\mathbb{R}^{2}\times \mathbb{R}^{2}\rightarrow \mathbb{R}$ by $h\Big((x_1,y_1),(u_1,v_1)\Big)=x_1 u_1.$ Then $h$ is a continuous function. Now we will show that the mapping $T$ is not topologically Banach contraction w.r.t $h.$ Let $\alpha \in (0,1), x=(\frac{1}{2},0), y=(1,0).$ Now
$$\Big|h(T(x),T(y))\Big|=\Big|h((1,0),(2,0))\Big|=2>\alpha |h(x,y)|=\frac{\alpha}{2}.$$
So the mapping $T$ is not topologically Banach contraction w.r.t $h.$
\end{example}

Now, we give an example of a topologically Banach contraction mapping on a non-metrizable topological space.

\begin{example}
Consider $\mathbb{R}^{\omega},$ the space of all real valued sequences with the box topology. Then the space $\mathbb{R}^{\omega}$ is not metrizable. Define $g:\mathbb{R}^{\omega}\times \mathbb{R}^{\omega}\rightarrow \mathbb{R}$ by $$g\Big((x_n),(y_n)\Big)=x_1y_1;~(x_n),(y_n)\in \mathbb{R}^{\omega}.$$ Then $g$ is a continuous function on $\mathbb{R}^{\omega}\times \mathbb{R}^{\omega}$ with respect to the box topology. Define $f:\mathbb{R}^{\omega}\rightarrow \mathbb{R}^{\omega}$ by
$$f(x_1,x_2,\dots)=(0,x_1,x_2,\dots);~(x_1,x_2,\dots)\in \mathbb{R}^{\omega}.$$ Now let $x=(x_n),y=(y_n)\in \mathbb{R}^{\omega}$ and $\alpha\in (0,1).$ Then
$$0=\Big|g\Big(f(x),f(y)\Big)\Big|\leq \alpha \Big|g\Big(x,y\Big)\Big|.$$ So, $f$ is a topologically Banach contraction mapping w.r.t $g.$
\end{example}

We present our first desired result `Banach contraction principle' in a topological space.
\begin{theorem}\label{t2}
Let $X$ be a $g$-complete topological space where $g:X\times X\rightarrow \mathbb{R}$ is a continuous function such that $g(x,y)=0\Longrightarrow x=y,~   |g(x,y)|=|g(y,x)|,$ $|g(x,z)|\leq |g(x,y)|+|g(y,z)|$ for all $x,y,z\in X.$ Let $U:X\rightarrow X$ be a topologically Banach contraction mapping w.r.t $g$. Then $U$ has a unique fixed point and for any $p_0\in X,$ the sequence $p_{n+1}=U(p_n)$ for all $n\geq 0$ will converge to the unique fixed point of $U.$
\end{theorem}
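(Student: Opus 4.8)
The plan is to run the classical Picard iteration argument, with the metric replaced throughout by $|g(\cdot,\cdot)|$. Fix $p_0\in X$ and set $p_{n+1}=U(p_n)$. First I would establish the geometric decay $|g(p_{n+1},p_n)|\le \alpha^{n}|g(p_1,p_0)|$ by induction, applying the contraction inequality of Definition \ref{d2} once at each step. Then, for $m>n$, the triangle inequality $|g(x,z)|\le|g(x,y)|+|g(y,z)|$ yields
$$|g(p_n,p_m)|\le\sum_{k=n}^{m-1}|g(p_k,p_{k+1})|\le\Big(\sum_{k=n}^{m-1}\alpha^{k}\Big)|g(p_1,p_0)|\le\frac{\alpha^{n}}{1-\alpha}|g(p_1,p_0)|,$$
which tends to $0$ as $n\to\infty$. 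Hence $\{p_n\}$ is $g$-Cauchy, and by $g$-completeness there is $p\in X$ with $|g(p_n,p)|\to 0$.

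Next I would show that $p$ is a fixed point. Applying the contraction inequality, $|g(p_{n+1},U(p))|=|g(U(p_n),U(p))|\le\alpha|g(p_n,p)|\to 0$, so the sequence $\{p_{n+1}\}$ is $g$-convergent to $U(p)$; it is also $g$-convergent to $p$, being a tail of $\{p_n\}$. The hypotheses of the theorem — the separation property $g(x,y)=0\Rightarrow x=y$ and symmetry $|g(x,y)|=|g(y,x)|$ together with the triangle inequality — imply the hypotheses of Lemma \ref{l1} (indeed $|g(x,z)|\le|g(x,y)|+|g(y,z)|=|g(y,x)|+|g(y,z)|$), so $g$-limits are unique, and therefore $U(p)=p$.

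Finally, for uniqueness, if $U(p)=p$ and $U(q)=q$ then $|g(p,q)|=|g(U(p),U(q))|\le\alpha|g(p,q)|$, forcing $(1-\alpha)|g(p,q)|\le 0$, hence $g(p,q)=0$ and $p=q$ by the separation hypothesis. Convergence of the Picard sequence to this unique fixed point was already obtained in the first step, completing the proof.

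As for where the work actually lies: every individual step is a direct transcription of the metric-space argument, so there is no deep obstacle. The one point requiring care is the identification $U(p)=p$, which is \emph{not} obtained from continuity of $U$ (not assumed here) but from uniqueness of $g$-limits, and hence depends on verifying that Lemma \ref{l1} applies under the present hypotheses. I would also remark that continuity of $g$ plays no role in this particular argument — only the three algebraic conditions on $|g|$ and $g$-completeness are used.
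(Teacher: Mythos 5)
Your proposal is correct and follows essentially the same route as the paper's proof: geometric decay of $|g(p_{n+1},p_n)|$, the triangle inequality to get the $g$-Cauchy property, $g$-completeness to produce a limit, uniqueness of $g$-limits via Lemma \ref{l1} to identify the limit with its image under $U$, and the standard contradiction for uniqueness. Your explicit check that the theorem's symmetry plus triangle inequality yield the hypothesis of Lemma \ref{l1} is a detail the paper glosses over, but it does not change the argument.
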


\begin{proof}
As $U:X\rightarrow X$ is topologically Banach contraction map w.r.t $g$ so there exists $\beta \in (0,1)$ such that $$|g(U(x),U(y))|\leq \beta |g(x,y)|~\mbox{for all}~x,y \in X.$$ Let $p_0 \in X$ and define a sequence $\{p_n\}\subset X$ by $p_{n+1}=U(p_{n})~\mbox{for all}~n\geq 0, n\in \mathbb{N}.$ Now
\begin{eqnarray}
|g(p_{n+1},p_n)|&=& |g(U(p_n),U(p_{n-1})|\nonumber \\
&\leq& \alpha |g(p_n, p_{n-1})|\nonumber \\
&=& \alpha |g(U(p_{n-1}),U(p_{n-2})| \nonumber \\
& \vdots& \nonumber  \\
&\leq& \alpha^n |g(p_{1},p_{0})|. \nonumber
\end{eqnarray}
Suppose that $m>n$ and $n\in \mathbb{N}.$ Let $m=n+r$ where $r\geq 1.$ Now,
$$|g(p_n,p_{n+r})|\leq |g(p_n,p_{n+1})|+ |g(p_{n+1},p_{n+2})|+\dots +|g(p_{n+r-1},p_{n+r})|$$
$$\Longrightarrow |g(p_n,p_{n+r})|\leq \Big(\beta^{n}+\beta^{n+1}+\dots +\beta^{n+r-1}\Big)|g(p_0,p_1)|$$
$$\Longrightarrow |g(p_n,p_{n+r})|\leq \beta^{n}\Big(\frac{1-\beta^{r}}{1-\beta}\Big)|g(p_0,p_1)|\rightarrow 0~\mbox{as}~n,r\rightarrow \infty.$$
This shows that the sequence $\{p_n\}_{n\geq 0}$ is a $g$-Cauchy sequence. Since $X$ is $g$-complete, so the sequence $\{p_n\}_{n\geq 0}$ is $g$-convergent to $p^{*}\in X~\mbox{(say)}.$ Now since $U$ is topologically Banach contraction map so,
$$|g(U(p_n),U(p^{*}))|\leq \beta |g(p_n,p^{*})|\rightarrow 0~\mbox{as}~n\rightarrow \infty.$$
This shows that the sequence $\{U(p_n)\}_{n\geq 0}$ is $g$-convergent to $U(p^{*}).$ But~$p_{n+1}=U(p_n)$ so, $\{U(p_n)\}_{n\geq 0}$ is $g$-convergent to $p^{*}.$ As the continuous function $g$ satisfies the conditions of Lemma \ref{l1}, so the limit is unique. So, $U(p^{*})=p^{*}.$ So the mapping $U$ has a fixed point. Now suppose $U$ has two fixed points $p^{*}$ and $p^{**},p^{*}\neq p^{**}.$ Now,
$$|g(U(p^{*}),U(p^{**}))|\leq \beta |g(p^{*},p^{**})|$$
$$\Longrightarrow |g(p^{*},p^{**})|\leq \beta |g(p^{*},p^{**})|.$$ This is a contradiction since $|g(p^{*},p^{**})|>0.$ So the mapping $U$ has unique fixed point.
\end{proof}

\begin{note}
The preceding theorem is an extension of Banach contraction principle from metric space to general topological space $X$ with a continuous real-valued function $g$ defined on $X\times X.$ If the space $X$ is metrizable with respect to a metric $d$ then by taking $g=d,$ we will get the Banach contraction principle for standard metric spaces.
\end{note}

\begin{example}
Consider $X=[0,1]$ with the usual standard subspace topology inherit from $\mathbb{R}.$ Define $g:X\times X\rightarrow \mathbb{R}$ by $$g(x,y)=x^{2}-y^{2}.$$ Then $g$ is a continuous function on $\mathbb{R}\times \mathbb{R}.$ Define $T:X\rightarrow X$ by $$T(x)=\frac{x}{2}~, x\in X.$$ It can be seen that $[0,1]$ is $g$-complete. Here the continuous function $g$ satisfies all the conditions of Theorem \ref{t2}. But $g$ is not a metric, since $g$ can take negetive values. Let $x,y\in [0,1].$ Now
$$\Big|g(T(x),T(y))\Big|=\Big|g(\frac{x}{2},\frac{y}{2})\Big|=\frac{1}{4}|(x^{2}-y^{2}|)=\frac{1}{4}\Big|g(x,y)\Big|.$$ So $T$ is topologically Banach contraction map w.r.t $g$. By previous Theorem \ref{t2}, $T$ has unique fixed point. Here $p^{*}=0$ is the fixed point of $T.$
\end{example}

Now in the upcoming example, we will show that if any one of the conditions of the continuous function $g$ defined in Theorem \ref{t2} is violated, then there may exist infinitely many fixed points of the mapping $T.$

\begin{example}
Consider the space $\mathbb{R}^{2}$ with the usual topology. Define $g:\mathbb{R}^{2}\times \mathbb{R}^{2}\rightarrow \mathbb{R}$ by $$g\Big((x,y),(u,v)\Big)=y-v,~(x,y),(u,v)\in \mathbb{R}^{2}.$$ Then $g$ is a continuous function. Here $g\Big((1,2),(4,2)\Big)=0$ but $(1,2)\neq(4,2).$ So the function $g$ does not satisfied all the conditions of Theorem \ref{t2}. Now define $T:\mathbb{R}^{2}\rightarrow \mathbb{R}^{2}$ by $$T((x,y))=(x,\frac{y}{2}),~(x,y)\in \mathbb{R}^{2}.$$ Now we will show that $T$ is topologically Banach contraction map w.r.t $g.$ Let $(x_1,y_1),(x_2,y_2)\in \mathbb{R}^{2}.$ Now
$$\Big|g\Big(T(x_1,y_1),T(x_2,y_2)\Big)\Big|=\Big|g\Big((x_1,\frac{y_1}{2}),(x_1,\frac{y_1}{2})\Big)\Big|=\frac{1}{2}|y_1-y_2|=\frac{1}{2}\Big|g\Big((x_1,y_1),(x_2,y_2)\Big)\Big|.$$ So, $T$ is a topologically Banach contraction map w.r.t $g.$ Let $\{(x_n,y_n)\}$ be a $g$-Cauchy sequence in $\mathbb{R}^{2}.$ So $$\Big|g\Big((x_n,y_n),(x_m,y_m)\Big)\Big|\rightarrow 0~\mbox{as}~n,m\rightarrow \infty$$
$$\Longrightarrow |y_n-y_m|\rightarrow 0~\mbox{as}~n,m\rightarrow \infty.$$
So the sequence $\{y_n\}$ is a Cauchy sequence of real numbers. Let $y_n\rightarrow y\in \mathbb{R}~\mbox{as}~n\rightarrow \infty.$ Fix $n_0 \in \mathbb{N}.$ Now $$\Big|g\Big((x_n,y_n),(x_{n_0},y)\Big)\Big|=|y_n-y|\rightarrow 0~\mbox{as}~n\rightarrow \infty.$$ This shows that the sequence $\{(x_n,y_n)\}$ is $g$-convergent to $(x_{n_0},y).$ So $\mathbb{R}^{2}$ is $g$-complete. It can be seen that for any $x\in \mathbb{R},p^{*}=(x,0)$ is a fixed point of $T.$ So there are infinitely many fixed points of $T.$
\end{example}

\begin{corollary}
Let $X$ be a $g$-complete topological space where $g:X\times X\rightarrow \mathbb{R}$ is a continuous function such that $g(x,y)=0\Longrightarrow x=y,~|g(x,y)|=|g(y,x)|,$ $|g(x,z)|\leq |g(x,y)|+|g(y,z)|$ for all $x,y,z\in X.$ Let $U:X\rightarrow X$ be a mapping such that for some $n_0\in \mathbb{N},$ $U^{n_0}:X\rightarrow X$ is a topologically Banach contraction mapping w.r.t $g$. Then $U$ has a unique fixed point.
\end{corollary}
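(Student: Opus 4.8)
The plan is to reduce everything to Theorem \ref{t2} applied to the map $V:=U^{n_0}$, and then transfer the conclusion from $V$ back to $U$ by a commutation argument. Since $g$ satisfies the hypotheses of Theorem \ref{t2} and $V=U^{n_0}:X\rightarrow X$ is a topologically Banach contraction w.r.t.\ $g$, Theorem \ref{t2} immediately gives a unique point $p^{*}\in X$ with $V(p^{*})=p^{*}$, i.e.\ $U^{n_0}(p^{*})=p^{*}$, and moreover the Picard iterates of $V$ converge to $p^{*}$.

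Next I would show $p^{*}$ is in fact a fixed point of $U$ itself. The key observation is that $U$ commutes with its own iterate: $U\circ U^{n_0}=U^{n_0}\circ U=U^{n_0+1}$. Hence
\[
U^{n_0}\bigl(U(p^{*})\bigr)=U\bigl(U^{n_0}(p^{*})\bigr)=U(p^{*}),
\]
so $U(p^{*})$ is also a fixed point of $V=U^{n_0}$. By the uniqueness part of Theorem \ref{t2}, $U(p^{*})=p^{*}$, so $U$ has a fixed point. For uniqueness of the fixed point of $U$, note that if $U(q)=q$ then $U^{n_0}(q)=q$ as well, so $q$ is a fixed point of $V$; since $V$ has a unique fixed point, $q=p^{*}$.

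I do not expect a genuine obstacle here; the only point requiring a little care is that $U$ itself need not be a topologically Banach contraction w.r.t.\ $g$ (indeed it need not even satisfy any contractive estimate), so Theorem \ref{t2} cannot be applied to $U$ directly — it is applied only to $U^{n_0}$, and the passage back to $U$ must go through the algebraic identity $U^{n_0+1}=U\circ U^{n_0}=U^{n_0}\circ U$ together with the uniqueness clause. One may optionally remark that, as a by-product, for any $p_0\in X$ the subsequence $\{U^{kn_0}(p_0)\}_{k\geq 0}$ is $g$-convergent to $p^{*}$.
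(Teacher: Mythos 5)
Your proposal is correct and is exactly the standard argument the paper has in mind: the paper omits the proof with the remark that it ``follows from Theorem \ref{t2},'' and your reduction to $V=U^{n_0}$, the commutation identity $U^{n_0}\circ U=U\circ U^{n_0}$, and the appeal to the uniqueness clause of Theorem \ref{t2} supply precisely the missing details. No gaps.
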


\begin{proof}
The proof follows from Theorem \ref{t2}, so omitted.
\end{proof}

Now we recall the following definition from \cite{RP}.

\begin{definition}\cite{RP}
Let $A$, $B$ be non-empty subsets of a topological space $X$. Let $g:X\times X \rightarrow \mathbb{R}$ be a continuous function. Define $$D_g(A,B) =  \inf\{|g(x, y)|: x\in A,\ y\in B\} .$$
\end{definition}

In this paper, we will use the following definitions.
$$A_g = \{x\in A: |g(x,y)|=D_g(A, B)~\mbox{for some}~ y\in B\} \mbox{ and}$$   $$B_g = \{y\in B: |g(x, y)|=D_g(A, B)~\mbox{for some}~ x\in A\}.$$

We like to introduce the definition of topologically proximal weak contraction in a topological space $X$ as follows:

\begin{definition}\label{d1}
Let $(A, B)$ be a pair of non-empty subsets of a topological space $X.$ A mapping $f:A\rightarrow B$ is said to be topologically proximal weak contraction with respect to a continuous function $g:X\times X\rightarrow \mathbb{R}$ if there exists $\beta\in (0, 1)$ and $ N\geq 0$ such that \[
\begin{rcases}
 |g(u_1, f(x_1))|= D_g(A, B)\\
 |g(u_2, f(x_2))|= D_g(A, B)
 \end{rcases}
  {\Longrightarrow |g(u_1,u_2)|\leq \beta |g(x_1,x_2)|+ N|g(x_2, u_1)|}
  \] for all $x_1, x_2, u_1, u_2 \in A.$
\end{definition}
\begin{note}
In Definition \ref{d1}, if we take $A=B$ and $N=0$ we may not get the Definition \ref{d2} because the continuous function $g$ does not necessarily satisfy $g(x,y)=0\nLeftrightarrow x=y$ as we already see in Example \ref{e2}.
\end{note}
\begin{note}
If the topological space $X$ is metrizable with respect to a metric $d$, then by taking $g=d$ we will get the notion of proximal weak contraction for standard metric spaces introduced by Bunlue and Suantai in \cite{BSS}. In particular, if we take $g=d$ and $N=0$ then we will get the notion of proximal contraction introduced by Basha in \cite{BS}.
\end{note}

In our last definition, we mention that the mapping $f$ is a topologically proximal weak contraction with respect to the continuous mapping $g$, and it is important. In our upcoming example we will show that there exist two subsets $A$ and $B$ in a topological space $X$ and a mapping $f:A\rightarrow B$ such that $f$ is topologically proximal weak contraction with respect to a continuous function $g$ but is not topologically proximal weak contraction with respect to another continuous function $h.$

\begin{example}
Consider $\mathbb{R}^2$ with the usual topology. Let $A=\{0\}\times [-1,1]$ and $B=\{1\}\times [-1,1].$ Let $T:A\rightarrow B$ be defined by $T(0,y)=(1,\frac{y}{4}).$ Let $g:\mathbb{R}^2 \times \mathbb{R}^2\rightarrow \mathbb{R}$ be defined by $g\Big((x,y),(u,v)\Big)=y^2-v^2.$ Then $g$ is a continuous function. Now we will show that $T$ is a topologically proximal weak contraction with respect to $g.$ It is clear that $D_g(A,B)=0.$ Let $x_1=(0,p_1),x_2=(0,p_2),u_1=(0,y_1),u_2=(0,y_2)\in A$ and $|g(x_1, T(u_1))|= 0$ and $|g(x_2, T(u_2))|=0.$ So $$\Big|g((0,p_1),(1,\frac{y_1}{4}))\Big|=0$$
$$\Longrightarrow p_1^{2}- \frac{y_1^{2}}{16}=0.$$
Similarly, from the second equation, we get,
$$p_2^{2}- \frac{y_2^{2}}{16}=0.$$
Now, $|g(x_1,x_2)|=p_1^{2}-p_2^{2}=\frac{1}{16}(y_1^{2}-y_2^{2})=\frac{1}{16}|g(u_1,u_2)|.$ This shows that $T$ is a topologically proximal weak contraction with respect to $g$ with $\beta=\frac{1}{16}$ and $N=0.$
Now let $h:\mathbb{R}^2 \times \mathbb{R}^2\rightarrow \mathbb{R}$ be defined by $h\Big((x,y),(u,v)\Big)=\mbox{min}\{y,v\}.$ It can be seen that $D_h(A,B)=0.$ Let $x_1=(0,\frac{1}{2}),x_2=(0,\frac{1}{4}),u_1=(0,0),u_2=(0,0)\in A$ and $|h(x_1, T(u_1))|= 0$ and $|h(x_2, T(u_2))|=0.$ Now if $\beta \in (0,1)$ and $N\geq 0$ then we have
$$\frac{1}{4}=|h(x_1,x_2)|>\beta |h(u_1,u_2)|+N|h(u_2,x_1)|=0.$$ This shows that $T$ is not topologically proximal weak contraction with respect to $h.$
\end{example}

In our next example, we show that the notion of topologically proximal weak contraction with respect to a continuous function is indeed more general than the notion of proximal weak contraction introduced by Bunlue and Suantai in \cite{BSS}. We show that, there exists a topological space $X$ with a continuous real valued function $g$, two non-empty disjoint subsets $A,B$ of $X$ and a function $f:A\rightarrow B$ such that $f$ is topologically proximal weak contraction w.r.t $g$ but if the topological space is metrizable with respect to a metric $d$ then $f$ is not proximal weak contraction w.r.t the metric $d.$
\begin{example}
Consider $\mathbb{R}$ with the usual topology. Let $g:\mathbb{R}\times \mathbb{R}\rightarrow \mathbb{R}$ be defined by 
$$g(x,y)=x^{2}-y^{2},~x,y\in \mathbb{R}.$$ Then $g$ is a continuous function. Let $A=\{0,1,2,3,5\}$ and $B=\{-1,-2,-3,4\}.$ Let $f:A\rightarrow B$ be defined by $f(0)=f(3)=f(5)=4, f(1)=-1, f(2)=-2.$ Then it can be seen that $D_g(A,B)=0.$ Let $\beta=\frac{1}{2}$ and $N=1.$ Now
$$\Big|g\Big(1,f(1)\Big)\Big|=D_g(A,B)$$
and
$$\Big|g\Big(2,f(2)\Big)\Big|=D_g(A,B).$$
Now $3=\Big|g\Big(1,2\Big)\Big|\leq \frac{1}{2}.\Big|g\Big(1,2\Big)\Big|+1.\Big|g\Big(2,1\Big)\Big|.$ This shows that $f$ is topologically proximal weak contraction w.r.t $g$ with $\beta=\frac{1}{2}$ and $N=1.$ Let $d$ denote the usual metric on $\mathbb{R}$ and $D(A,B)=\mbox{inf}\{d(x,y):x\in A, y\in B\}=1.$ Now 
$$d\Big(5,f(0)\Big)=D(A,B)$$
and
$$g\Big(0,f(1)\Big)=D(A,B).$$ But 
$$5=d(5,0)>\frac{1}{2}.d(0,1)+1.d(1,5).$$ So $f$ is not proximal weak contraction with respect to the usual metric on $\mathbb{R}$ with  $\beta=\frac{1}{2}$ and $N=1.$
\end{example}

In the following, we present a sufficient condition for topologically proximal weak contraction mappings to have a unique best proximity point in arbitrary topological space $X.$ Before that, we introduce the definition of a $g$-closed set and $g$-sequentially compact set as follows:

\begin{definition}
Let $X$ be a topological space and $g:X\times X\rightarrow \mathbb{R}$ be a continuous function. A non-empty subset $A$ of $X$ is said to be $g$-closed if every $g$-convergent sequence $\{x_n\}\subset A$, converges to a point in $A.$
\end{definition}

\begin{definition}
Let $X$ be a topological space and $g:X\times X\rightarrow \mathbb{R}$ be a continuous function. A non-empty subset $A$ of $X$ is said to be $g$-sequentially compact if every sequence $\{x_n\}_{n\in \mathbb{N}}$ in $A$ has a $g$-convergent subsequence $\{x_{n_{k}}\}$ which converges to a point in $A.$
\end{definition}

In the upcoming example we show that there exists a non-empty subset $A$ of topological space $X$ such that $A$ is $g$-closed but not closed in $X$ with respect to the usual topology. We also find a non-empty set which is $g$-sequentially compact but not sequentially compact with respect to the usual topology.

\begin{example}\label{e4}
Consider $\mathbb{R}$ with the usual topology and let $g:\mathbb{R}\times \mathbb{R}\rightarrow \mathbb{R}$ be defined by $g(x,y)=x-y+\frac{1}{2}.$ Let $A=(0,\infty).$ Then $A$ is not closed with respect to the usual topology in $\mathbb{R}.$ Let $\{x_n\}$ be a sequence in $A$ which is $g$-convergent to $x\in \mathbb{R}.$ So
$$\Big|g(x_n,x)\Big|\rightarrow 0~\mbox{as}~n\rightarrow \infty$$
$$\Longrightarrow \Big|x_n-x+\frac{1}{2}\Big|\rightarrow 0~\mbox{as}~n\rightarrow \infty$$
$$\Longrightarrow x_n\rightarrow (x-\frac{1}{2})~\mbox{as}~n\rightarrow \infty.$$
But since $\{x_n\}$ is a sequence in $(0,\infty)$ so, we have $x-\frac{1}{2}\geq 0.$ This shows that $x\geq \frac{1}{2}$ and $A$ is $g$-closed.

Now consider $\mathbb{R}^{2}$ with the usual topology and $g:\mathbb{R}^{2}\times \mathbb{R}^{2}\rightarrow \mathbb{R}$ be defined by
$$g\Big((x,y),(u,v)\Big)=y-v,~(x,y),(u,v)\in \mathbb{R}^{2}.$$
Then $g$ is a continuous function. Let $B=\{\frac{1}{n}: n\in \mathbb{N}\}\times \{0\}\cup \{\frac{1}{n}: n\in \mathbb{N}\}.$ Then $B$ is not sequentially compact in $\mathbb{R}^{2}$ with respect to the usual topology. If $\{x_n\}$ is a sequence in $B$ with finite range and $\{x_n\}$ is either constant or ultimately constant sequence then it is clear that there exists $p\in B$ such that $$\Big|g(x_n,p)\Big|\rightarrow 0~\mbox{as}~n\rightarrow \infty.$$ So, in this case, we take the subsequence as the sequence itself and $\{x_n\}$ is $g$-convergent to $p\in B.$ On the other hand, let $\{x_n\}$ is a sequence in $B$ with finite range and $(p,q)\in B$ is a cluster point of the sequence. In this case, we take the subsequence as, $x_{n_{k}}=(p,q)~\mbox{for all}~k\in \mathbb{N}.$ So, in this case $\{x_{n_{k}}\}$ is $g$-convergent to $(p,q)\in B.$  Now let $\{(p_n,t_n)\}$ be a sequence in $B$ with infinite range and $t_n\rightarrow 0~\mbox{as}~n\rightarrow \infty.$ In this case the sequence $\{(p_n,t_n)\}$ is $g$-convergent to $(1,0)\in B$ since
$$\Big|g\Big((p_n,t_n),(1,0)\Big)\Big|=\Big|t_n\Big|\rightarrow 0~\mbox{as}~n\rightarrow \infty.$$
Similarly if $\{(p_n,t_n)\}$ is a sequence in $B$ with infinite range and $p_n\rightarrow 0~\mbox{as}~n\rightarrow \infty$ then similarly we can show that there exists a subsequence of $\{(p_n,t_n)\}$ which is $g$-convergent to some element of $B.$ So, $B$ is $g$-sequentially compact in $\mathbb{R}^{2}$ but not sequentially compact.
\end{example}

In our next example we show that a non-empty subset, which is $g$-closed, where $g$ is a real valued continuous function on $X\times X$, may not be $h$-closed with respect to another real valued continuous function $h$ on $X\times X.$

\begin{example}
Consider $\mathbb{R}$ with the usual topology and let $g:\mathbb{R}\times \mathbb{R}\rightarrow \mathbb{R}$ be defined by $g(x,y)=x-y+\frac{1}{2}.$ Let $A=(0,\infty).$ Then from example \ref{e4}, $A$ is $g$-closed but $A$ is not closed with respect to usual topology on $\mathbb{R}.$ Now let $h:\mathbb{R}\times \mathbb{R}\rightarrow \mathbb{R}$ be defined by $h(x,y)=xy.$ Now we will show that the set $A$ is not $h$-closed. Consider the sequence $\{\frac{1}{n}\}$ in $A.$ Now the sequence $\{\frac{1}{n}\}$ is $h$-convergent to $-\frac{1}{2}$ since
$$\Big|h(\frac{1}{n},-\frac{1}{2})\Big|=-\frac{1}{2n}\rightarrow 0~\mbox{as}~n\rightarrow \infty$$ but $-\frac{1}{2}\notin A.$ So $A$ is not $h$-closed.
\end{example}

%Now we will prove a best proximity point result for topologically proximal weak contraction mapping on topological spaces.

\begin{theorem}\label{t1}
Let $X$ be a $g$-complete topological space where $g:X\times X\rightarrow \mathbb{R}$ is a continuous function such that $g(x,y)=0\Leftrightarrow x=y, |g(x,y)|=|g(y,x)|$ and $|g(x,z)|\leq |g(x,y)|+|g(y,z)|$ for all $x,y,z\in X.$ Let $(A,B)$ be a pair of non-empty subsets of $X$ such that $A_g$ is non-empty and $g$-closed. Let $T:A\rightarrow B$ be topologically proximal weak contraction mapping w.r.t $g$ with $\beta \in (0,1),N\geq 0$ such that $T(A_g)\subseteq B_g.$ Then
\begin{enumerate}
 \item there exists a best proximity point $p*\in A_g$ of $T$ and the sequence $\{p_n\}_{n\geq 0}$ defined by $p_0 \in A_g$ and $|g(p_{n+1}, T(p_n))|= D_g(A, B)$ converges to the best proximity point of $T;$
% \item $Lim \{x_n\} $ is either empty or $g-$converges to a point $x^\ast \in A $ which satisfy $|g(x^\ast, f(x^\ast))|= D_g(A, B).$
 \item moreover if $(1-\beta-N)>0,$ then the best proximity point $p^{*}$ is unique.
\end{enumerate}
\end{theorem}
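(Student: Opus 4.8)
The plan is to reproduce the Picard-type scheme of Theorem~\ref{t2}, but iterating along the ``proximal'' relation rather than along $T$ itself, and then to identify the limit via the uniqueness-of-$g$-limits Lemma~\ref{l1}. First I would construct the iteration sequence. Start from $p_0\in A_g$. Since $T(A_g)\subseteq B_g$ we have $T(p_0)\in B_g$, so by the very definition of $B_g$ there is some $p_1\in A$ with $|g(p_1,T(p_0))|=D_g(A,B)$; because $T(p_0)\in B$, this same identity witnesses $p_1\in A_g$. Iterating, one obtains a sequence $\{p_n\}\subseteq A_g$ with $|g(p_{n+1},T(p_n))|=D_g(A,B)$ for all $n\geq 0$, which is exactly the sequence named in the statement.

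Next I would bound consecutive terms. Applying the topologically proximal weak contraction inequality with $u_1=p_n,\ x_1=p_{n-1},\ u_2=p_{n+1},\ x_2=p_n$ gives $|g(p_n,p_{n+1})|\leq \beta|g(p_{n-1},p_n)|+N|g(p_n,p_n)|$; here the direction $x=y\Rightarrow g(x,y)=0$ of the biconditional is used to get $g(p_n,p_n)=0$ and thus kill the $N$-term, leaving $|g(p_n,p_{n+1})|\leq \beta|g(p_{n-1},p_n)|\leq\cdots\leq\beta^n|g(p_0,p_1)|$. Then, exactly as in the proof of Theorem~\ref{t2}, the triangle inequality together with $\sum_{k\geq n}\beta^k=\beta^n/(1-\beta)$ shows that $\{p_n\}$ is $g$-Cauchy. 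By $g$-completeness it is $g$-convergent to some $p^*\in X$, and since $A_g$ is $g$-closed and $\{p_n\}\subseteq A_g$, in fact $p^*\in A_g$; in particular $T(p^*)\in B_g$.

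The crux is to show $p^*$ is a best proximity point. Since $T(p^*)\in B_g$, choose $u\in A$ with $|g(u,T(p^*))|=D_g(A,B)$. Applying the contraction inequality with $u_1=p_{n+1},\ x_1=p_n,\ u_2=u,\ x_2=p^*$ yields $|g(p_{n+1},u)|\leq \beta|g(p_n,p^*)|+N|g(p^*,p_{n+1})|\to 0$ as $n\to\infty$. Hence $\{p_{n+1}\}$ is $g$-convergent both to $u$ and to $p^*$, and since $g$ satisfies the hypotheses of Lemma~\ref{l1}, $u=p^*$, so $|g(p^*,T(p^*))|=D_g(A,B)$. This proves (1). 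For (2), if $p^*$ and $q^*$ are both best proximity points, the contraction inequality with $u_1=x_1=p^*$ and $u_2=x_2=q^*$ gives $|g(p^*,q^*)|\leq \beta|g(p^*,q^*)|+N|g(q^*,p^*)|=(\beta+N)|g(p^*,q^*)|$, i.e. $(1-\beta-N)|g(p^*,q^*)|\leq 0$; when $1-\beta-N>0$ this forces $g(p^*,q^*)=0$, hence $p^*=q^*$.

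I expect the limit-identification step to be the only delicate point: one must first extract the auxiliary element $u$ from $B_g$, then run the contraction against the tail $\{p_{n+1}\}$, and only then invoke Lemma~\ref{l1} to collapse $u$ onto $p^*$. Everything else is a routine transcription of the Banach argument with $g$ in place of the metric, the $N$-term neutralised by $g(x,x)=0$, and $g$-closedness of $A_g$ supplying $p^*\in A_g$.
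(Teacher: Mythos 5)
Your proposal is correct and follows essentially the same route as the paper's own proof: the same proximal Picard iteration, the same use of $g(x,x)=0$ to kill the $N$-term, the same geometric-series Cauchy estimate, and the same limit-identification step via an auxiliary point $u\in A_g$ with $|g(u,T(p^*))|=D_g(A,B)$ collapsed onto $p^*$ by Lemma~\ref{l1}. The uniqueness argument under $1-\beta-N>0$ is also identical.
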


\begin{proof} Let $p_0 \in A_g.$ Since $T(A_g)\subseteq B_g$, we have $T(p_0)\in B_g.$ So, there exists $p_1 \in A_g$ such that $|g(p_1, T(p_0))|= D_g(A,B).$ Similarly, as $T(p_1)\in B_g$, so there exists $p_2\in A_g$ such that $|g(p_2, T(p_1))|= D_g(A,B).$ Continuing this process, we get a sequence $\{p_n\}_{n\geq 0} \subset A_g$ such that $$|g(p_{n+1}, T(p_n))|= D_g(A, B)~ \forall~n\geq 0.$$

%Let $x_0\in A_0.$ Suppose that $Lim\{x_n\}$ is non-empty and choose $x^{*}\in Lim\{x_n\}.$\\ Consider the sequence $\{|g(x_n, x_m)|\}$ in $\mathbb{R}\ n\neq m.$\\ Then because of $g$ is continuous and $x_n\rightarrow x^{*}$ we have, $$|g(x_n, x_m)|\rightarrow |g(x^{*}, x^{*})|= 0\ as \ n\rightarrow \infty.$$ Therefore $\{x_n\}$ is $g-$Cauchy sequence.\\ Now $X$ is $g-$complete and $A_0$ is closed. So $x^{*}\in A_0$ and $x_n\xrightarrow{g} x^{*}.$\\ By assumption $f(A_0)\subseteq f(B_0)$ again, there exists $u\in A_0$ such that $$|g(u, f(x^{*}))|= D_g(A, B).$$ Since $|g(x_{n+1}, f(x_n))|= D_g(A, B)$ for all $ n\in \mathbb{N},$ by topological weak proximal contractiveness of $f$ we have $$|g(x_{n+1},u)|\leq \alpha |g(x_n, x^{*})|+ L|g(x^{*}, x_{n+1})|.$$ It implies $|g(x_{n+1}, u)|\rightarrow 0.$ Therefore $x_n\xrightarrow{g} u.$\\ Now we show that $u= x^{*}.$ Since $x_n\xrightarrow{g} u,$ therefore $lim_n |g(x_n, u)|= 0.$\\ Since $g$ is continuous and $x_n\xrightarrow{g} x^{*}$ we have $|g(x^{*}, u)|= 0.$\\ This implies $u= x^{*}.$ Therefore $$|g(x^{*}, f(x^{*})|= D_g(A, B).$$
%Finally we show that if ${1-\alpha-L}> 0$ then the best proximity point of f is unique.\\
%Suppose there exists $x^{**}\in A_0$ such that $$|g(x^{**}, f(x^{**})|= D_g(A, B)$$Since $f$ is topologically proximal weak contraction, we get $$|g(x^{*}, x^{**})|\leq \alpha|g(x^{*}, x^{**})| + L|g(x^{**}, x^{*})|.$$ Which implies $$(1-\alpha-L)|g(x^{*}, x^{**})\leq 0.$$Hence $x^{*}$ and $x^{**}$ are same point if ${1-\alpha-L}> 0$
Now we will show that the sequence $\{p_n\}_{n\geq 0}$ is a $g$-Cauchy sequence. From the construction of the sequence we have,
$$|g(p_n,T(p_{n-1}))|=D_g(A,B)$$ and $$|g(p_{n+1},T(p_n))|=D_g(A,B).$$ As $T$ is a topologically proximal weak contraction mapping w.r.t $g$, so we have,
$$|g(p_n,p_{n+1})|\leq \beta |g(p_{n-1},p_n)|+N |g(p_n,p_n)|$$
$$\Rightarrow |g(p_n,p_{n+1})|\leq \beta |g(p_{n-1},p_n)|.$$
So, we get $|g(p_n,p_{n+1})|\leq \beta^{n}|g(p_0,p_1)|.$ Suppose that $m>n$ and $n\in \mathbb{N}.$ Let $m=n+r$ where $r\geq 1.$ Now,
$$|g(p_n,p_{n+r})|\leq |g(p_n,p_{n+1})|+ |g(p_{n+1},p_{n+2})|+\dots +|g(p_{n+r-1},p_{n+r})|$$
$$\Longrightarrow |g(p_n,p_{n+r})|\leq \Big(\beta^{n}+\beta^{n+1}+\dots +\beta^{n+r-1}\Big)|g(p_0,p_1)|$$
$$\Longrightarrow |g(p_n,p_{n+r})|\leq \beta^{n}\Big(\frac{1-\beta^{r}}{1-\beta}\Big)|g(p_0,p_1)|\rightarrow 0~\mbox{as}~n,r\rightarrow \infty.$$
This shows that the sequence $\{p_n\}_{n\geq 0}$ is a $g$-Cauchy sequence. Since $X$ is $g$-complete, so the sequence $\{p_n\}_{n\geq 0}$ is $g$-convergent to a point $p^{*}\in X.$ Since $A_g$ is $g$-closed so $p^{*}\in A_g.$ Since $T(p^{*})\in B_g$ so there exists $x\in A_g$ such that $|g(x,T(p^{*})|=D_g(A,B).$ Also, $|g(p_{n+1},T(p_n))|=D_g(A,B).$ Thus we have
$$|g(p_{n+1},x)|\leq \beta |g(p_n,p^{*})|+N |g(p^{*},p_{n+1})|$$
$$\Longrightarrow |g(p_{n+1},x)|\rightarrow 0~\mbox{as}~n\rightarrow \infty.$$ This shows that the sequence $\{p_n\}_{n\geq 0}$ is also $g$-convergent to $x\in A_g.$ But since the limit is unique as we see from Lemma \ref{l1}, so, $x=p^{*}.$ We have $|g(p^{*},T(p^{*})|=D_g(A,B)$ that is, $p^{*}$ is a best proximity point of $T.$
Now, suppose the mapping $T$ has two best proximity points $p^{*}$ and $p^{**}.$ So we have
$$|g(p^{*},T(p^{*})|=D_g(A,B)$$ and
$$|g(p^{**},T(p^{**})|=D_g(A,B).$$
As $T$ is topologically proximal weak contraction, so we have,
$$|g(p^{*},p^{**})|\leq \beta |g(p^{*},p^{**})|+N |g(p^{*},p^{**})|$$
$$\Longrightarrow (1-\beta-N)|g(p^{*},p^{**})|\leq 0$$
$$\Longrightarrow |g(p^{*},p^{**})|=0 ~[\mbox{since}~(1-\beta-N)>0]$$
$$\Longrightarrow p^{*}=p^{**}~[\mbox{since}~g(x,y)=0\Rightarrow x=y].$$
So the best proximity point is unique.
\end{proof}
\begin{example}
Consider $\mathbb{R}^{2}$ with the usual topology and $X=\{1\}\times [-1,1]$ with subspace topology. Let $g:X\times X\rightarrow \mathbb{R}$ be defined by $$g\Big((x,y),(u,v)\Big)=y-v,~(x,y),(u,v)\in X.$$ Then $g$ is a continuous function on $X\times X.$ Now we will show that $X$ is $g$-complete. Let $\{(1,x_n)\}$ be a $g$-Cauchy sequence in $X.$ So
$$\Big|g\Big((1,x_n),(1,x_m)\Big)\Big|\rightarrow 0~\mbox{as}~n,m\rightarrow \infty$$
$$\Longrightarrow |x_n-x_m|\rightarrow 0~\mbox{as}~n,m\rightarrow \infty.$$
So the sequence $\{x_n\}$ is a Cauchy sequence in $[0,1].$ Since $[-1,1]$ is complete, so let $x_n\rightarrow p\in [-1,1]~\mbox{as}~n\rightarrow \infty.$ Now, $$\Big|g\Big((1,x_n),(1,p)\Big)\Big|=|x_n-p|\rightarrow 0~\mbox{as}~n\rightarrow \infty.$$ So the sequence $\{(1,x_n)\}$ is $g$-convergent to $(1,p)\in X.$ So $X$ is $g$-complete. Now let $A=\{1\}\times [-1,0]$ and $B=\{1\}\times [0,1].$ Then $D_g(A,B)=0.$ Now let $(1,x)\in A_g.$ Then there exists $(1,y)\in B$ such that $|g((1,x),(1,y))|=0.$ So $|x-y|=0.$ This is satisfied only by $x=0.$ This shows that $A_g=\{(1,0)\}.$ Also, $B_g=\{(1,0)\}.$  So, $A_g$ is non-empty and $g$-closed. Now it can be seen that the function $g$ is satisfied all the conditions of Theorem \ref{t1}.

Now define $f:A\rightarrow B$ by $$f(1,x)=(1,-\frac{x}{2}),~(1,x)\in A.$$ So, $f(1,0)=(1,0)\Longrightarrow f(A_g)\subseteq B_g.$ Let $(1,p_1),(1,p_2),(1,u_1),(1,u_2)\in A$ such that $$\Big|g\Big((1,p_1),f(1,u_1)\Big)\Big|=0$$ and  $$\Big|g\Big((1,p_2),f(1,u_2)\Big)\Big|=0.$$ These two equations imply that $p_1+\frac{u_1}{2}=0$ and $p_2+\frac{u_2}{2}=0.$ Now
$$\Big|g\Big((1,p_1),(1,p_2)\Big)\Big|=|p_1-p_2|=\frac{1}{2}|u_1-u_2|=\frac{1}{2}\Big|g\Big((1,u_1),(1,u_2)\Big)\Big|.$$ This shows that the mapping $f$ is topologically proximal weak contraction mapping with respect to $g.$ So all the conditions of Theorem \ref{t1} are satisfied. So, by the Theorem \ref{t1} the mapping $f$ has a best proximity point in $A_g.$ Here $p^{*}=(1,0)\in A_g$ is a best proximity point of $f.$ In this example the best proximity point is unique because here $(1-\beta-N)=\frac{1}{2}>0.$ So the Theorem \ref{t1} holds good.
\end{example}

\begin{note}
Theorem \ref{t1} is an extension and improvement of Theorem $3.1$ of \cite{BSS} from metric space to topological space $X$ with a continuous function defined on $X\times X.$ If the topological space $X$ is metrizable with respect to metric $d$ then by taking $g=d$ we will get Theorem 3.1 of \cite{BSS}. Also in Theorem \ref{t1} if we take $A=B,g=d$ and $N=0$ then we get the Banach contraction principle for topological spaces. So Theorem \ref{t1} is an extension of Banach contraction principle from standard metric spaces to topological spaces.
\end{note}

In a topological space $X$ on which there is no linear space structure, it is hard to define the notion of convex sets in $X.$ In order to define the notion of convex sets in an arbitrary topological space $X$ with a continuous function $g:X\times X\rightarrow \mathbb{R}$, we first introduce the notion of topologically convex structure on $X$ as follows:

\begin{definition}
Let $X$ be a topological space and $g:X\times X\rightarrow \mathbb{R}$ be a continuous function. A continuous function $H:X\times X\times [0,1]\rightarrow X$ is called topologically convex structure w.r.t $g$ if the two conditions are satisfied:
\begin{enumerate}
 \item $|g(x_0,H(x,y,\lambda))|\leq \lambda |g(x_0,x)|+(1-\lambda)|g(x_0,y)|~\mbox{for all}~x_0,x,y\in X~\mbox{and}~\lambda\in [0,1];$
 \item $|g(H(x,y,\lambda),H(x_0,y_0,\lambda))|\leq \lambda |g(x,x_0)|+(1-\lambda)|g(y,y_0)|~\mbox{for all}~x_0,x,y,y_0\in X~\mbox{and}~\lambda\in [0,1].$
\end{enumerate}
\end{definition}

A non empty subset $A$ of $X$ is said to be convex if $H(x,y,\lambda)\in A~\mbox{for all}~x,y \in A~\mbox{and}~\lambda\in [0,1].$ Now by using the notion of topologically convex structure, we will define the concept of topologically $r$-starshaped subset of $X$ as follows:

\begin{definition}
Let $X$ be a topological space and $g:X\times X\rightarrow \mathbb{R}$ be a continuous function. Let $H:X\times X\times [0,1]\rightarrow X$ be a topologically convex structure on $X$ w.r.t $g.$ A non empty subset $A$ of $X$ is called topologically $r$-starshaped if there exists a point $r\in A$ such that $H(r,x,\lambda)\in A~\mbox{for all}~x\in A~\mbox{and}~\lambda\in [0,1].$
\end{definition}

Now we present a lemma that will be necessary for our upcoming theorem about best proximity points.

\begin{lemma}\label{l2}
Let $X$ be a topological space and $g:X\times X\rightarrow \mathbb{R}$ be a continuous function. Let $H:X\times X\times [0,1]\rightarrow X$ be a topologically convex structure on $X$ w.r.t $g.$ Let $A,B(\neq \phi)\subset X$ such that $A$ is topologically $r$-starshaped and $B$ is topologically $s$-starshaped and $|g(r,s)|=D_g(A,B).$ Then $A_g$ is topologically $r$-starshaped and $B_g$ is topologically $s$-starshaped.
\end{lemma}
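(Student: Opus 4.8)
The plan is to show that $A_g$ is topologically $r$-starshaped by verifying that for each $x \in A_g$ and each $\lambda \in [0,1]$, the point $H(r,x,\lambda)$ again lies in $A_g$; the argument for $B_g$ being topologically $s$-starshaped is entirely symmetric. First I would fix $x \in A_g$, so that by definition there exists some $y \in B_g$ (in fact $y \in B$) with $|g(x,y)| = D_g(A,B)$. Since $B$ is topologically $s$-starshaped, the point $H(s,y,\lambda)$ belongs to $B$ for every $\lambda \in [0,1]$; similarly, since $A$ is topologically $r$-starshaped, $H(r,x,\lambda) \in A$. So the candidate witness pair is $\bigl(H(r,x,\lambda), H(s,y,\lambda)\bigr) \in A \times B$, and it remains to estimate $\bigl|g\bigl(H(r,x,\lambda), H(s,y,\lambda)\bigr)\bigr|$.

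Here is where I would invoke property (2) of the topologically convex structure $H$ with respect to $g$: applying it with the four points $r, x$ (first slot pair) and $s, y$ (second slot pair) gives
\[
\bigl|g\bigl(H(r,x,\lambda), H(s,y,\lambda)\bigr)\bigr| \leq \lambda\,|g(r,s)| + (1-\lambda)\,|g(x,y)|.
\]
Now I would substitute the two hypotheses $|g(r,s)| = D_g(A,B)$ and $|g(x,y)| = D_g(A,B)$, which collapses the right-hand side to $\lambda\,D_g(A,B) + (1-\lambda)\,D_g(A,B) = D_g(A,B)$. On the other hand, since $H(r,x,\lambda) \in A$ and $H(s,y,\lambda) \in B$, the definition of $D_g(A,B)$ as an infimum forces $\bigl|g\bigl(H(r,x,\lambda), H(s,y,\lambda)\bigr)\bigr| \geq D_g(A,B)$. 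Combining the two inequalities yields $\bigl|g\bigl(H(r,x,\lambda), H(s,y,\lambda)\bigr)\bigr| = D_g(A,B)$, which shows precisely that $H(r,x,\lambda) \in A_g$ (with $H(s,y,\lambda) \in B$ serving as the required partner point). Since $r \in A_g$ as well — because $|g(r,s)| = D_g(A,B)$ with $s \in B$ — this establishes that $A_g$ is topologically $r$-starshaped.

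I do not anticipate a genuine obstacle here: the proof is a short sandwich argument, and the only thing to be careful about is the bookkeeping of which point goes into which argument slot of $H$ and of $g$, together with checking that $r \in A_g$ and $s \in B_g$ so that the starshapedness statements are about the correct base points. The one hypothesis that is doing real work is the compatibility condition $|g(r,s)| = D_g(A,B)$, without which the right-hand side $\lambda\,|g(r,s)| + (1-\lambda)\,D_g(A,B)$ could exceed $D_g(A,B)$ and the infimum lower bound would no longer be matched. The symmetric claim for $B_g$ follows by interchanging the roles of $A$ and $B$, of $r$ and $s$, and of the two slots in property (2), using $|g(x,y)| = |g(y,x)|$ if one wishes to keep the arguments in a fixed order.
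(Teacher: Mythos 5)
Your proposal is correct and follows essentially the same route as the paper's proof: the same sandwich argument bounding $\bigl|g\bigl(H(r,x,\lambda),H(s,y,\lambda)\bigr)\bigr|$ above by $\lambda|g(r,s)|+(1-\lambda)|g(x,y)|=D_g(A,B)$ via property (2) of the convex structure and below by the infimum defining $D_g(A,B)$. No discrepancies worth noting.
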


\begin{proof}
Since $|g(r,s)|=D_g(A,B)$ so $r\in A_g$ and $A_g\neq \phi.$ Let $x\in A_g~\mbox{and}~\lambda\in [0,1].$ Since $x\in A$ and $A$ is topologically $r$-starshaped, so $H(r,x,\lambda)\in A.$ Since $x\in A_g$ so there exists $y\in B_g$ such that $|g(x,y)|=D_g(A,B).$ Since $y\in B$ and $B$ is topologically $s$-starshaped, so $H(s,y,\lambda)\in B.$ Now
$$ D_g(A,B)\leq \Big|g(H(r,x,\lambda),H(s,y,\lambda))\Big|\leq \lambda |g(r,s)|+(1-\lambda)|g(x,y)|=D_g(A,B).$$
This shows that $H(r,x,\lambda)\in A_g$ and $A_g$ is topologically $r$-starshaped. Similarly we can show that $B_g$ is topologically $s$-starshaped.
\end{proof}

Now we introduce the concept of topologically semi-sharp proximinal pair in an arbitrary topological space $X$ as follows:

\begin{definition}
Let $(A, B)$ be a pair of non-empty subsets of a topological space $X$. The pair $(A, B)$ is said to be a topologically semi-sharp proximinal pair w.r.t a continuous function $g:X\times X\rightarrow \mathbb{R}$ if for each $x\in A$ there exists at most one $x^*$ in $B$ such that $$|g(x,x^*)|=D_g(A, B).$$
\end{definition}

\begin{lemma}\label{l3}
Let $X$ be topological space and $g:X\times X\rightarrow \mathbb{R}$ be a continuous function. Let $(A,B)(\neq \phi)\subset X$ be a topologically semi-sharp proximinal pair w.r.t $g$ such that $A_g, B_g \neq \phi.$ Then $(A_g,B_g)$ is a topologically semi-sharp proximinal pair w.r.t $g.$
\end{lemma}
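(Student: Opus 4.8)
The plan is to reduce the claim to the semi-sharpness of the original pair $(A,B)$ by transporting the defining property along the inclusions $A_g\subseteq A$ and $B_g\subseteq B$. The one preliminary fact that makes this work is that passing to the proximal subsets does not change the infimum, i.e. $D_g(A_g,B_g)=D_g(A,B)$.

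First I would establish this equality. Since $A_g\subseteq A$ and $B_g\subseteq B$, taking the infimum over the smaller sets gives $D_g(A_g,B_g)\geq D_g(A,B)$ at once. For the reverse inequality I would use the hypothesis $A_g\neq\phi$: choose any $x\in A_g$; by definition of $A_g$ there is some $y\in B$ with $|g(x,y)|=D_g(A,B)$, and this same $y$ witnesses that $y\in B_g$. Hence the pair $(x,y)$ lies in $A_g\times B_g$ and $D_g(A_g,B_g)\leq |g(x,y)|=D_g(A,B)$. Combining the two inequalities yields $D_g(A_g,B_g)=D_g(A,B)$.

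Next, to verify that $(A_g,B_g)$ is a topologically semi-sharp proximinal pair w.r.t.\ $g$, I would take an arbitrary $x\in A_g$ and suppose $x^{*},x^{**}\in B_g$ both satisfy $|g(x,x^{*})|=|g(x,x^{**})|=D_g(A_g,B_g)$. By the first step this common value equals $D_g(A,B)$, and because $A_g\subseteq A$ and $B_g\subseteq B$ we have $x\in A$ and $x^{*},x^{**}\in B$ with $|g(x,x^{*})|=|g(x,x^{**})|=D_g(A,B)$. Since $(A,B)$ is a topologically semi-sharp proximinal pair w.r.t.\ $g$, there is at most one element of $B$ realizing the distance $D_g(A,B)$ from $x$, so $x^{*}=x^{**}$. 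This is exactly the required at-most-one property for $(A_g,B_g)$, and the proof of $B_g$-side is symmetric (in fact nothing extra is needed, since the definition only quantifies over $x\in A_g$).

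I do not expect a genuine obstacle here; the only step requiring a moment's care is the equality $D_g(A_g,B_g)=D_g(A,B)$, which genuinely relies on the non-emptiness of $A_g$ (equivalently $B_g$) assumed in the statement. Everything else is a direct restriction of the defining inequality to the subsets.
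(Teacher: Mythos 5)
Your proof is correct and complete: the key step $D_g(A_g,B_g)=D_g(A,B)$ is established properly using $A_g\neq\phi$, and the at-most-one property then transfers directly by restriction since $A_g\subseteq A$ and $B_g\subseteq B$. The paper omits the proof as straightforward, and your argument is exactly the standard one it has in mind.
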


\begin{proof}
The proof is straightforward, so omitted.
\end{proof}

Now we like to introduce the notion of topologically proximal Berinde non-expansive mapping in a topological space $X$ as follows:

\begin{definition}
Let $(A, B)$ be a pair of non-empty subsets of a topological space $X.$ Let $g:X\times X\rightarrow \mathbb{R}$ be a continuous function. A mapping $f:A\rightarrow B$ is said to be topologically proximal Berinde non-expansive w.r.t $g$, if there exists $N\geq 0$ such that \[
\begin{rcases}
 |g(u_1, f(x_1))|= D_g(A, B)\\
 |g(u_2, f(x_2))|= D_g(A, B)
 \end{rcases}
  {\Longrightarrow |g(u_1,u_2)|\leq |g(x_1,x_2)|+ N|g(x_2, u_1)|}
  \] for all $x_1, x_2, u_1, u_2 \in A.$
\end{definition}

\begin{note}
If the topological space $X$ is metrizable with respect to a metric $d$, then by taking $g=d$ we will get the notion of proximal Berinde nonexpansive mappings for standard metric spaces introduced by Bunlue and Suantai in \cite{BSS}. In particular, if we take $g=d$ and $N=0$ then we will get the notion of proximal nonexpansive mappings introduced by Gabeleh in \cite{MG}.
\end{note}

In our last definition, we mention that the mapping $f$ is a topologically proximal Berinde non-expansive w.r.t the continuous mapping $g$, and it is important. In our upcoming example, we will show that there exist two subsets $A$ and $B$ in a topological space $X$ and a mapping $f:A\rightarrow B$ such that $f$ is topologically proximal Berinde non-expansive w.r.t a continuous function $g$ but is not topologically proximal Berinde non-expansive w.r.t another continuous function $h.$

\begin{example}
Consider $X=[0,3]\times\mathbb{R}$ with the usual subspace topology of $\mathbb{R}^{2}\times \mathbb{R}^{2}.$ Let $A=\{0\}\times \mathbb{R}$ and $B=\{3\}\times \mathbb{R}.$ Let $T:A\rightarrow B$ be defined by $T(0,y)=(3,y).$ Let $g:\mathbb{R}^2 \times \mathbb{R}^2\rightarrow \mathbb{R}$ be defined by $g((x,y),(u,v))=y^2-v^2.$ Then $g$ is a continuous function. Now we will show that $T$ is a topologically proximal Berinde non-expansive w.r.t $g.$ It is clear that $D_g(A,B)=0.$ Now let $x_1=(0,p_1),x_2=(0,p_2),u_1=(0,y_1),u_2=(0,y_2)\in A$ and $|g(x_1, T(u_1))|= 0$ and $|g(x_2, T(u_2))|=0.$ So $$\Big|g((0,p_1),(3,y_1))\Big|=0$$
$$\Longrightarrow p_1^{2}- y_1^{2}=0.$$
Similarly, from the second equation, we get,
$$p_2^{2}- y_2^{2}=0.$$
Now, $|g(x_1,x_2)|=|p_1^{2}-p_2^{2}|=|y_1^{2}-y_2^{2}|=|g(u_1,u_2)|.$ This shows that $T$ is a topologically proximal Berinde non-expansive w.r.t $g$ with $N=0.$

Now let $h:\mathbb{R}^2 \times \mathbb{R}^2\rightarrow \mathbb{R}$ be defined by $h((x,y),(u,v))=\mbox{min}\{y,v\}.$ It can be seen that $D_h(A,B)=0.$ Let $x_1=(0,1),x_2=(0,2),u_1=(0,0),u_2=(0,0)\in A$ and $|h(x_1, T(u_1))|= 0$ and $|h(x_2, T(u_2))|=0.$ Now if $N\geq 0$ then we have
$$ 1=|h(x_1,x_2)|> |h(u_1,u_2)|+N|h(u_2,x_1)|=0.$$ This shows that $T$ is not topologically proximal Berinde non-expansive w.r.t $h.$
\end{example}

In our next example, we show that the notion of topologically proximal Berinde non-expansive mapping with respect to a continuous function is indeed more general than the notion of proximal Berinde non-expansive mapping introduced by Bunlue and Suantai in \cite{BSS}. We show that, there exists a topological space $X$ with a continuous real valued function $g$, two non-empty disjoint subsets $A,B$ of $X$ and a function $f:A\rightarrow B$ such that $f$ is topologically proximal Berinde non-expansive w.r.t $g$ but if the topological space is metrizable with respect to a metric $d$ then $f$ is not proximal Berinde non-expansive w.r.t the metric $d.$
\begin{example}
Consider $\mathbb{R}$ with the usual topology. Let $g:\mathbb{R}\times \mathbb{R}\rightarrow \mathbb{R}$ be defined by 
$$g(x,y)=x^{2}-y^{2},~x,y\in \mathbb{R}.$$ Then $g$ is a continuous function. Let $A=\{0,1,2,3,5\}$ and $B=\{-1,-2,-3,4\}.$ Let $f:A\rightarrow B$ be defined by $f(0)=f(3)=f(5)=4, f(1)=-1, f(2)=-2.$ Then it can be seen that $D_g(A,B)=0.$ Let $N=1.$ Now
$$\Big|g\Big(1,f(1)\Big)\Big|=D_g(A,B)$$
and
$$\Big|g\Big(2,f(2)\Big)\Big|=D_g(A,B).$$
Now $3=\Big|g\Big(1,2\Big)\Big|\leq \Big|g\Big(1,2\Big)\Big|+1.\Big|g\Big(2,1\Big)\Big|.$ This shows that $f$ is topologically proximal Berinde non-expansive w.r.t $g$ with $N=1.$ Let $d$ denote the usual metric on $\mathbb{R}$ and $D(A,B)=\mbox{inf}\{d(x,y):x\in A, y\in B\}=1.$ Now 
$$d\Big(0,f(1)\Big)=D(A,B)$$
and
$$g\Big(3,f(0)\Big)=D(A,B).$$ But 
$$3=d(0,3)>d(1,0)+1.d(0,0).$$ So $f$ is not proximal Berinde non-expansive with respect to the usual metric on $\mathbb{R}$ with $N=1.$
\end{example}

We will present a theorem regarding the existence of best proximity point of a topologically proximal Berinde non-expansive mapping in topological spaces.

\begin{theorem}\label{t3}
Let $X$ be a $g$-complete topological space where $g:X\times X\rightarrow \mathbb{R}$ is a continuous function such that $g(x,y)=0\Leftrightarrow x=y,~   |g(x,y)|=|g(y,x)|,$ $|g(x,z)|\leq |g(x,y)|+|g(y,z)|$ for all $x,y,z\in X$ and $|g(r,x)|+|g(y,s)|=2D_g(A_g,B_g)(A_g,B_g \neq \phi)~\mbox{for all}~x\in B_g,~y\in A_g.$ Let $H:X\times X\times [0,1]\rightarrow X$ be a topologically convex structure on $X$ w.r.t $g.$ Let $(A, B)$ be a topologically semi-sharp proximinal pair of non-empty subsets of $X$ such that $A$ is $r$-starshaped, $B$ is $s$-starshaped  w.r.t $g$ and $|g(r,s)|=D_g(A, B).$ Assume $A_g$ is compact, $g$-sequentially compact and $g$-closed. Suppose that $f: A\rightarrow B$ satisfies the following conditions:\begin{enumerate} \item $f$ is a topologically proximal Berinde non-expansive mapping with respect to $g;$ \item $f(A_g)\subseteq B_g.$ \end{enumerate} Then there exists $p^{*} \in A_g$ such that $|g(p^{*}, f(p^{*}))|= D_g(A, B)$ i.e. $f$ has a best proximity point in $A_g.$
\end{theorem}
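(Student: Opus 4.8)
The plan is to mimic the classical Schauder-type argument for non-expansive maps on starshaped compact sets, but carried out entirely through the function $g$. The key device is to run the best proximity point theorem for topologically proximal weak contractions (Theorem \ref{t1}) on a sequence of auxiliary maps obtained by ``contracting toward the starshaped center'' via the topologically convex structure $H$, and then pass to the limit using the compactness of $A_g$.

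First I would fix a sequence $\lambda_n\in(0,1)$ with $\lambda_n\to 1$ and, for each $n$, define an auxiliary map $f_n:A\to B$ by $f_n(x)=H(s,f(x),\lambda_n)$. Since $H$ is a topologically convex structure w.r.t.\ $g$ and $B$ is topologically $s$-starshaped, $f_n$ does map $A$ into $B$; moreover by Lemma \ref{l2} we have $B_g$ topologically $s$-starshaped (using $|g(r,s)|=D_g(A,B)$), so the additional hypothesis $|g(r,x)|+|g(y,s)|=2D_g(A_g,B_g)$ for $x\in B_g,y\in A_g$ is what forces $f_n(A_g)\subseteq B_g$: one checks that if $|g(u,f(x))|=D_g(A,B)$ with $u\in A_g$ and $f(x)\in B_g$, then the point $H(r,u,\lambda_n)$ (which lies in $A_g$ by Lemma \ref{l2}) is at $g$-distance $D_g(A,B)$ from $f_n(x)=H(s,f(x),\lambda_n)$, using condition (2) of the convex structure together with the ``equidistance'' hypothesis to collapse the estimate to equality. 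Next I would verify that each $f_n$ is a topologically proximal weak contraction w.r.t.\ $g$ with constant $\beta=\lambda_n<1$ and the same $N$ as $f$: if $|g(u_i,f_n(x_i))|=D_g(A,B)$ for $i=1,2$, then using condition (2) of $H$, the identity $|g(H(s,a,\lambda),H(s,b,\lambda))|\le\lambda|g(a,b)|$, and the fact that $f$ is topologically proximal Berinde non-expansive, one obtains $|g(u_1,u_2)|\le\lambda_n|g(x_1,x_2)|+N|g(x_2,u_1)|$. (This is the step that requires the most care with the definitions of $A_g$, $B_g$, and the proximinal structure — I expect the bookkeeping of which points realize the infimum $D_g(A,B)$ to be the main technical obstacle, together with checking that the semi-sharp proximinal hypothesis, via Lemma \ref{l3}, makes the relevant points unique so the estimates chain together.)

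Then, applying Theorem \ref{t1} to each $f_n$ (noting $A_g$ is non-empty and $g$-closed, $X$ is $g$-complete, and $\beta=\lambda_n\in(0,1)$, $N\ge 0$), I get for each $n$ a best proximity point $q_n\in A_g$ with $|g(q_n,f_n(q_n))|=D_g(A,B)$. Since $A_g$ is $g$-sequentially compact, the sequence $\{q_n\}$ has a subsequence $\{q_{n_k}\}$ that $g$-converges to some $p^{*}\in A_g$. The final step is to show $|g(p^{*},f(p^{*}))|=D_g(A,B)$. For this I would use continuity of $g$ and of $H$: since $f(p^{*})\in B_g$, pick $z\in A_g$ with $|g(z,f(p^{*}))|=D_g(A,B)$; estimate $|g(q_{n_k},z)|$ by first relating $q_{n_k}$ to $f_{n_k}(q_{n_k})$ and to the Berinde non-expansive inequality for $f$, and use that $f_{n_k}(q_{n_k})=H(s,f(q_{n_k}),\lambda_{n_k})\to H(s,f(p^{*}),1)=f(p^{*})$ as $\lambda_{n_k}\to 1$ (continuity of $H$), together with $|g(q_{n_k},p^{*})|\to 0$, to conclude $|g(q_{n_k},z)|\to 0$; hence $p^{*}=z$ by uniqueness of $g$-limits (Lemma \ref{l1}), giving $|g(p^{*},f(p^{*}))|=D_g(A,B)$. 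The role of the compactness of $A_g$ (beyond $g$-sequential compactness) and of the semi-sharp proximinal hypothesis is to guarantee that these realizing points $z$ and the values $f_n(q_n)$ are well-defined and that the limits identify correctly; I would expect the passage to the limit $\lambda_{n_k}\to 1$, where the contraction constant degenerates to $1$, to be the second delicate point, handled precisely by the compactness assumption on $A_g$.
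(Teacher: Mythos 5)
Your strategy is exactly the paper's: approximate $f$ by maps pulled toward the star centre $s$ via the convex structure, check that each approximant is a topologically proximal weak contraction, invoke Theorem \ref{t1} to get best proximity points $q_n$, extract a $g$-convergent subsequence using the $g$-sequential compactness of $A_g$, and identify the limit via semi-sharp proximinality and uniqueness of $g$-limits. The architecture is right, but one concrete step fails as written: you orient the parameter of $H$ the wrong way. By the paper's axiom (2) for a topologically convex structure, $|g(H(s,a,\lambda),H(s,b,\lambda))|\leq \lambda|g(s,s)|+(1-\lambda)|g(a,b)|=(1-\lambda)|g(a,b)|$, not $\lambda|g(a,b)|$ as you assert; the \emph{first} slot of $H$ carries the weight $\lambda$. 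Consequently, taking $\lambda_n\to 1$ makes $f_n(x)=H(s,f(x),\lambda_n)$ degenerate toward the constant map at $s$ (its contraction constant is $1-\lambda_n\to 0$), and the crucial limit identification $f_{n_k}(q_{n_k})=H(s,f(q_{n_k}),\lambda_{n_k})\to f(p^{*})$ is false: $H(s,\cdot,1)$ behaves like the constant $s$, so your scheme would only detect a proximity point of that constant map. The fix is to take $\lambda_n\to 0^{+}$ (the paper's $a_n$), giving contraction constant $1-\lambda_n<1$ and $f_n\to f$; with that sign reversed your argument becomes the paper's.

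Two smaller points. The inclusion $f_n(A_g)\subseteq B_g$ does not come from the equidistance hypothesis $|g(r,x)|+|g(y,s)|=2D_g(A_g,B_g)$; it follows from Lemma \ref{l2} ($B_g$ is $s$-starshaped) together with $f(A_g)\subseteq B_g$. That hypothesis is instead what lets you collapse the convexity estimate (condition (1) of $H$, not condition (2)) to the equality $|g(H(r,r_1,\lambda),f_n(q_1))|=D_g(A_g,B_g)$, which, combined with the semi-sharp proximinal property of $(A_g,B_g)$ (Lemma \ref{l3}), pins down the points realizing $D_g$ for $f_n$ as $H(r,\cdot,\lambda)$ applied to the points realizing $D_g$ for $f$; this identification is the heart of the weak-contraction verification and must be written out, since without it the Berinde inequality for $f$ cannot be transferred to $f_n$. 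You correctly anticipate that this is the delicate bookkeeping step.
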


\begin{proof}
Let $x\in A_g.$ Since $f(A_g)\subseteq B_g,$ so $f(x)\in B_g.$ As $(A, B)$ is a topologically semi-sharp proximinal pair of non-empty subsets of $X$ such that $A$ is $r$-starshaped, $B$ is $s$-starshaped  w.r.t $g$ then by Lemma \ref{l2}, we have $A_g$ is $r$-starshaped, $B_g$ is $s$-starshaped  w.r.t $g.$ Now define the sequence of functions $f_n:A_g\rightarrow B_g$ by $$f_{n}(x)=H(s,f(x),a_n),~x\in A_g.$$ Here the sequence $(a_n)\subset (0,1)$ is such that $a_n\rightarrow 0$ as $n\rightarrow \infty.$ Now we will show that the sequence of functions $\{f_n\}$ is topologically proximal weak contraction for each $n\in \mathbb{N}.$ Let $p_1,p_2,q_1,q_2\in A_g$ such that
$$\Big|g(p_1,f_{n}(q_1))\Big|=D_g(A_g,B_g)$$ and
$$\Big|g(p_2,f_{n}(q_2))\Big|=D_g(A_g,B_g).$$
As $f(q_1),f(q_2)\in B_g$ so there exist $r_1,r_2 \in A_g$ such that $$|g(r_1,f(q_1))|=D_g(A,B)$$ and $$|g(r_2,f(q_2))|=D_g(A,B).$$  Since $f$ is topologically proximal Berinde non-expansive map so we have,
\begin{equation}\label{e1}
|g(r_1,r_2)|\leq |g(q_1,q_2)|+ N|g(q_2,r_1)|,~N\geq 0 .
\end{equation}
From Lemma \ref{l2} and Lemma \ref{l3}, we have the sets $A_g$ and $B_g$ are topologically $r$-starshaped and topologically $s$-starshaped respectively and $(A_g,B_g)$ is a topologically semi-sharp proximinal pair of the topological space $X.$ So, $H(r,r_1,a_n)\in A_g$ and $H(r,r_2,a_n)\in A_g.$ Now
\begin{eqnarray}
D_g(A_g,B_g)&\leq& |g(H(r,r_1,a_n),f_{n}(q_1))|
= |g(f_{n}(q_1), H(r,r_1,a_n))| \nonumber\\
&\leq& a_n|g(f_{n}(q_1),r)|+(1-a_n)|g(f_{n}(q_1),r_1)|\nonumber \\
&=& a_n|g(H(s,f(q_1),a_n),r)|+(1-a_n)|g(H(s,f(q_1),a_n),r_1)| \nonumber \\
&\leq& a_n\Big\{a_n |g(r,s)|+(1-a_n)|g(r,f(q_1)|\Big\}\nonumber \\
&+&(1-a_n)\Big\{a_n |g(r_1,s)|+(1-a_n)|g(r_1,f(q_1)|\Big\}\nonumber \\
&=& \Big\{a_n^{2}+(1-a_n)^{2}\Big\}D_g(A,B)+a_n(1-a_n)\Big\{|g(r,f(q_1)|+|g(r_1,s)|\Big\}\nonumber \\
&\leq& D_g(A_g,B_g).\nonumber
\end{eqnarray}
$$\Longrightarrow |g(H(r,r_1,a_n),f_{n}(q_1))|=D_g(A_g,B_g).$$
Similarly, we can show that
$$|g(H(r,r_2,a_n),f_{n}(q_2))|=D_g(A_g,B_g).$$
Since $(A_g,B_g)$ is a topologically semi-sharp proximinal pair, so we have
$$ p_1= H(r,r_1,a_n), p_2=H(r,r_2,a_n).$$
Since $A_g$ is a compact topological space so $A_g \times A_g$ is compact. Since $g:X\times X\rightarrow \mathbb{R}$ is continuous, so the mapping $g$ restricted to $A_g \times A_g$ is continuous and hence bounded. So there exists $M>0$ such that $|g(x,y)|\leq M~\mbox{for all}~x,y \in A_g.$
Now, from equation \eqref{e1} we have,
\begin{eqnarray}
|g(p_1,p_2)|&=& |g(H(r,r_1,a_n),H(r,r_2,a_n))|\nonumber \\
&\leq& a_n |g(r,r)|+(1-a_n)|g(r_1,r_2)|\nonumber \\
&=& (1-a_n)|g(r_1,r_2)| \nonumber \\
&\leq& (1-a_n)|g(q_1,q_2)|+N(1-a_n)|g(q_2,r_1)| \nonumber \\
&\leq& (1-a_n)|g(q_1,q_2)|+N(1-a_n)M |g(q_2,p_1)|. \nonumber
\end{eqnarray}
Since $(1-a_n)>0$ and $N(1-a_n)M \geq 0$ so, the sequence of functions $\{f_{n}\}$ are topologically proximal weak contraction w.r.t $g$ for each $n\in \mathbb{N}$ and from Theorem \ref{t1} we can say, the mapping $f_n:A_g\rightarrow B_g$ has a best proximity point $p_n^{*} \in A_g$ such that $|g(p_n^{*},f_n(p_n^{*}))|=D_g(A_g,B_g)~\mbox{for all}~n\in \mathbb{N}.$ Since $A_g$ is $g$-sequentially compact, so the sequence $\{p_n^{*}\}\subset A_g$ has a subsequence $\{p_{n_{k}}^{*}\}$ which is $g$-convergent to $p^{*}\in A_g.$

Since $f(p_{n_{k}}^{*})\in B_g$ so there exists $y_{n_{k}}\in A_g$ such that $|g(y_{n_{k}},f(p_{n_{k}}^{*}))|=D_g(A,B).$ Also we have $|g(p_{n_{k}}^{*},f_{n_{k}}(p_{n_{k}}^{*}))|=D_g(A_g,B_g).$ Now
\begin{eqnarray}
D_g(A_g,B_g)&\leq& \Big|g(H(r,y_{n_{k}},a_{n_{k}}),f_{n_{k}}(p_{n_{k}}^{*}))\Big|
= \Big|g(f_{n_{k}}(p_{n_{k}}^{*}), H(r,y_{n_{k}},a_{n_{k}}))\Big| \nonumber\\
&\leq& a_{n_{k}}\Big|g(f_{n_{k}}(p_{n_{k}}^{*}),r)\Big|+(1-a_{n_{k}})\Big|g(f_{n_{k}}(p_{n_{k}}^{*}),y_{n_{k}})\Big|\nonumber \\
&=& a_{n_{k}}\Big|g(H(s,f(p_{n_{k}}^{*}),a_{n_{k}}),r)\Big|+(1-a_{n_{k}})\Big|g(H(s,f(p_{n_{k}}^{*}),a_{n_{k}}),y_{n_{k}})\Big| \nonumber \\
&\leq& a_{n_{k}}\Big\{a_{n_{k}} |g(r,s)|+(1-a_{n_{k}})|g(r,f(p_{n_{k}}^{*})|\Big\}\nonumber \\
&+&(1-a_{n_{k}})\Big\{a_{n_{k}} |g(y_{n_{k}},s)|+(1-a_{n_{k}})|g(y_{n_{k}},f(p_{n_{k}}^{*})|\Big\}\nonumber \\
&=& \Big\{a_{n_{k}}^{2}+(1-a_{n_{k}})^{2}\Big\}D_g(A,B)+a_{n_{k}}(1-a_{n_{k}})\Big\{|g(r,f(p_{n_{k}}^{*})|+|g(y_{n_{k}},s)|\Big\}\nonumber \\
&\leq& D_g(A_g,B_g).\nonumber
\end{eqnarray}
So we have, $\Big|g(H(r,y_{n_{k}},a_{n_{k}}),f_{n_{k}}(p_{n_{k}}^{*}))\Big|=D_g(A_g,B_g).$ So, $p_{n_{k}}^{*}=H(r,y_{n_{k}},a_{n_{k}}).$
Now,
\begin{eqnarray}
|g(p_{n_{k}}^{*},y_{n_{k}})|&=&|g(H(r,y_{n_{k}},a_{n_{k}}),y_{n_{k}})| \nonumber \\
&\leq& a_{n_{k}}|g(y_{n_{k}},r)|+(1-a_{n_{k}})|g(y_{n_{k}},y_{n_{k}})|\nonumber \\
&=& a_{n_{k}}|g(y_{n_{k}},r)|\rightarrow 0~\mbox{as}~k\rightarrow \infty. \nonumber
\end{eqnarray}
Here we use the fact that the sequence $a_n\rightarrow 0~\mbox{as}~n\rightarrow \infty.$ This shows that the sequence $\{y_{n_{k}}\}$ is $g$-convergent to $p^{*}\in A_g.$ As $f(p^{*})\in B_g$ so there exists $p^{**}\in A_g$ such that $|g(p^{**},f(p^{*}))|=D_g(A,B).$ Also we have $|g(y_{n_{k}},f(p_{n_{k}}^{*}))|=D_g(A,B).$ Since $f$ is topologically proximal Berinde non-expansive map w.r.t $g$ so we have,
$$|g(y_{n_{k}},p^{**})|\leq |g(p_{n_{k}}^{*},p^{*})|+N |g(y_{n_{k}},p^{*})|$$
$$\Longrightarrow |g(y_{n_{k}},p^{**})|\rightarrow 0~\mbox{as}~k\rightarrow \infty.$$
So the sequence $\{y_{n_{k}}\}$ is $g$-convergent to $p^{**}\in A_g.$ Since the limit is unique we have $p^{*}=p^{**}.$ So, $|g(p^{*},f(p^{*}))|=D_g(A,B).$ Hence the mapping $f$ has a best proximity point in $A_g.$
\end{proof}

Now we will provide an example to validate Theorem \ref{t3}.

\begin{example}
Consider $\mathbb{R}^{2}$ with the usual topology and $X=\{0\}\times [-1,1]$ with subspace topology. Let $g:X\times X\rightarrow \mathbb{R}$ be defined by $$g\Big((x,y),(u,v)\Big)=y-v,~(x,y),(u,v)\in X.$$ Then $g$ is a continuous function on $X\times X.$ Now we will show that $X$ is $g$-complete. Let $\{(0,x_n)\}$ be a $g$-Cauchy sequence in $X.$ So
$$\Big|g\Big((0,x_n),(0,x_m)\Big)\Big|\rightarrow 0~\mbox{as}~n,m\rightarrow \infty$$
$$\Longrightarrow |x_n-x_m|\rightarrow 0~\mbox{as}~n,m\rightarrow \infty.$$
So the sequence $\{x_n\}$ is a Cauchy sequence in $[-1,1].$ Since $[-1,1]$ is complete, so let $x_n\rightarrow x\in [-1,1]~\mbox{as}~n\rightarrow \infty.$ Now, $$\Big|g\Big((0,x_n),(0,x)\Big)\Big|=|x_n-x|\rightarrow 0~\mbox{as}~n,m\rightarrow \infty.$$ So the sequence $\{(0,x_n)\}$ is $g$-convergent to $(0,x)\in X.$ So $X$ is $g$-complete. Now let $A=\{0\}\times [-1,0]$ and $B=\{0\}\times [0,1].$ Then $D_g(A,B)=0.$ Now let $(0,x)\in A_g.$ Then there exists $(0,y)\in B$ such that $|g((0,x),(0,y))|=0.$ So $|x-y|=0.$ This is satisfied only by $x=0.$ This shows that $A_g=\{(0,0)\}.$ Similarly, $B_g=\{(0,0)\}.$ So, $A_g$ is compact, $g$-sequentially compact and $g$-closed.

Now let us define $H:X\times X\times [0,1]\rightarrow X$ by $$H\Big((0,y_1),(0,y_2),\beta\Big)=(0,\beta y_1+(1-\beta)y_2).$$ It can be seen that the mapping $H$ is topologically convex structure on $X$ w.r.t $g$ and the sets $A$ and $B$ are $(0,0)$-starshaped sets in $X.$ Also $\Big|g\Big((0,0),(0,0)\Big)\Big|=0=D_g(A,B).$ Also the pair $(A,B)$ is topologically semi-sharp proximinal in $X.$ Also here, $$|g((0,0),x)|+|g(y,(0,0))|=0=2D_g(A_g,B_g)$$ for all $x\in A_g, y\in B_g.$ So in this example the function $g$ satisfies all the conditions of Theorem \ref{t3}. Now define $f:A\rightarrow B$ by $$f(0,x)=(0,-x),~(0,x)\in A.$$ So, $f(0,0)=(0,0)\Longrightarrow f(A_g)\subseteq B_g.$ Let $(0,p_1),(0,p_2),(0,u_1),(0,u_2)\in A$ such that $$\Big|g\Big((0,p_1),f(0,u_1)\Big)\Big|=0$$ and  $$\Big|g\Big((0,p_2),f(0,u_2)\Big)\Big|=0.$$ These two equations imply that $p_1=-u_1$ and $p_2=-u_2.$ Now
$$\Big|g\Big((0,p_1),(0,p_2)\Big)\Big|=|p_1-p_2|=|u_1-u_2|=\Big|g\Big((0,u_1),(0,u_2)\Big)\Big|.$$ This shows that the mapping $f$ is topologically proximal Berinde non-expansive mapping with respect to $g.$ So all the conditions of Theorem \ref{t3} are satisfied. So by the Theorem \ref{t3}, the mapping $f$ has a best proximity point in $A_g.$ Here $p^{*}=(0,0)\in A_g$ is a best proximity point of $f.$ In this example the best proximity point is unique.
\end{example}
{\bf Open question.} In Theorem \ref{t3} we use the condition $|g(r,x)|+|g(y,s)|=2D_g(A_g,B_g)(A_g,B_g \neq \phi)~\mbox{for all}~x\in B_g,~y\in A_g,$ to prove the existence of best proximity points for topologically proximal Berinde non-expansive mapping w.r.t $g.$ Can Theorem \ref{t3} be proved without this condition?

\begin{Acknowledgement}
The first and third named authors are thankful to CSIR, Government of India, for their financial support (Ref. No. $25(0285)/18/$EMR-II).
\end{Acknowledgement}

\end{document}